\newcommand{\KK}{\mathbb{K}}
\newcommand{\CC}{\mathbb{C}}
\newcommand{\A}{\mathcal{A}}
\newcommand \ideal[1] {\langle #1 \rangle}
\newtheorem{Theorem}{Theorem}[section]
\newtheorem{Definition}[Theorem]{Definition}
\newtheorem{Lemma}[Theorem]{Lemma}
\newtheorem{Proposition}[Theorem]{Proposition}
\newtheorem{Corollary}[Theorem]{Corollary}
\newtheorem{Remark}[Theorem]{Remark}
\newtheorem{Example}[Theorem]{Example}
\DeclareMathOperator{\POexp}{POexp}
\DeclareMathOperator{\depth}{depth}
\DeclareMathOperator{\Ass}{Ass}
\DeclareMathOperator{\rk}{rk}
\DeclareMathOperator{\Der}{Der}
\DeclareMathOperator{\projdim}{projdim}
\DeclareMathOperator{\pdeg}{pdeg}
\DeclareMathOperator{\codim}{codim}
\begin{document}

\title{Localization of plus-one generated arrangements}

\begin{abstract}
We study the classes of free and plus-one generated hyperplane arrangements. Specifically, we describe how to compute the associated prime ideals of the Jacobian ideal of such an arrangement from its lattice of intersection. Moreover, we prove that the localization of a plus-one generated arrangement is free or plus-one generated. 
\\[0.2em]

\noindent
Keywords: Hyperplane arrangements, Freeness, Plus-one generated arrangement, Associated prime ideal, Localization of arrangements.

\noindent
MSC10: 52C35, 32S22.
\end{abstract}

\author{Elisa Palezzato}
\address{Elisa Palezzato, Department of Mathematics, Hokkaido University, Kita 10, Nishi 8, Kita-Ku, Sapporo 060-0810, Japan.}
\email{palezzato@math.sci.hokudai.ac.jp}
\author{Michele Torielli}
\address{Michele Torielli, Department of Mathematics, GI-CoRE GSB, Hokkaido University, Kita 10, Nishi 8, Kita-Ku, Sapporo 060-0810, Japan.}
\email{torielli@math.sci.hokudai.ac.jp}


\date{\today}
\maketitle



\section{Introduction}

Let $V$ be a vector space of dimension $l$ over a field $\KK$. Fix a system of coordinates $(x_1,\dots, x_l)$ of $V^\ast$. 
We denote by $S = S(V^\ast) = \KK[x_1,\dots, x_l]$ the symmetric algebra of $V^\ast$. 
A hyperplane arrangement $\A = \{H_1, \dots, H_n\}$ is a finite collection of hyperplanes in $V$. We refer to \cite{orlterao}
as main reference on the theory of arrangements.

In the theory of hyperplane arrangements, the freeness is a very important algebraic property. In fact, freeness implies several interesting geometric and combinatorial properties of the arrangement itself, see \cite{orlterao}. By definition, an arrangement is free if and only if its module of logarithmic derivations is a free module. A lot is known about free arrangements, however there is still some mystery around the notion of freeness. For example, Terao's conjecture asserting the dependence of freeness only on the combinatorics is the longstanding open problem in this area. 

In order to study this conjecture, in \cite{abe2018plus} Abe introduced the notion of plus-one generated arrangement, where an arrangement is plus-one generated if and only if its module of logarithmic derivations is generated by $l+1$ elements and we can ``control'' their first syzygy. Moreover, Abe described how free and plus-one generated arrangements are connected. In particular, he proved that the deletion of a free arrangement is free or plus-one generated and vice versa, under certain additional hypothesis, if the deletion is plus-one generated then the original arrangement is free or plus-one generated.

The goal of this article is to study more in depth these two classes of arrangements. Specifically, given an arrangement $\A$, we describe how to compute the associated prime ideals of the module $S/J(\A)$ from its lattice of intersection $L(\A)$, where $J(\A)$ is the Jacobian ideal of $\A$. Moreover, we prove that the localization of a plus-one generated arrangement is free or plus-one generated. Finally, we describe an example which demonstrates that the deletion of a plus-one generated arrangement is not necessarily free or plus-one generated.

All the computations in this article have been performed using the software CoCoA, see \cite{palezzato2018hyperplane}.

\section{Preliminares on hyperplane arrangements}\label{sec:arr}

In this section, we recall the terminology, the basic notations and some fundamental results related to hyperplane arrangements.

Let $\KK$ be a field of characteristic zero. A finite set of affine hyperplanes $\A =\{H_1, \dots, H_n\}$ in $\KK^l$ 
is called a \textbf{hyperplane arrangement}. For each hyperplane $H_i$ we fix a defining polynomial $\alpha_i\in S= \KK[x_1,\dots, x_l]$ such that $H_i = \alpha_i^{-1}(0)$, 
and let $Q(\A)=\prod_{i=1}^n\alpha_i$. An arrangement $\A$ is called \textbf{central} if each $H_i$ contains the origin of $\KK^l$. 
In this case, each $\alpha_i\in S$ is a linear homogeneous polynomial, and hence $Q(\A)$ is homogeneous of degree $n$. 
The operation of \textbf{coning} allows one to transform any arrangement $\A$ in $\KK^l$ with $n$ hyperplanes into a central arrangement $c\A$ with $n + 1$ hyperplanes in $\KK^{l+1}$, see \cite{orlterao}.
Unless otherwise specified, we will only consider central hyperplane arrangements. For this reason every time we will study $\A$ a central hyperplane arrangement, we will omit the word central.

Let $L(\A)=\{\bigcap_{H\in\mathcal{B}}H \mid \mathcal{B}\subseteq\A\}$ be the \textbf{lattice of intersection} of $\A$, 
ordered by reverse inclusion, i.e. $X\le Y$ if and only if $Y\subseteq X$, for $X,Y\in L(\A)$.
Define a rank function on $L(\A)$ by $\rk(X)=\codim(X)$. 
$L(\A)$ plays a fundamental role in the study of hyperplane arrangements, in fact it determines the combinatorics of the arrangement.
Let $L(\A)_p=\{X\in L(\A)~|~\rk(X)=p\}$. We call $\A$ \textbf{essential} if $L(\A)_l\ne\emptyset$.

For any flat $X\in L(\A)$ define the \textbf{localization} of $\A$ to $X$ as the subarrangement $\A_X$ of $\A$ by 
$$\A_X=\{H\in\A~|~X\subseteq H\}.$$

The \textbf{restriction} of $\A$ to $H\in\A$ is the arrangement $\A^H$ in $H\cong \KK^{l-1}$ defined by
$$\A^H =\{H\cap H'~|~H'\in\A\setminus\{H\} \text{ and } H\cap H'\ne\emptyset\}.$$


We denote by $\Der_{\KK^l} =\{\sum_{i=1}^l f_i\partial_{x_i}~|~f_i\in S\}$ the $S$-module of \textbf{polynomial vector fields} on $\KK^l$ (or $S$-derivations). 
Let $\delta =  \sum_{i=1}^l f_i\partial_{x_i}\in \Der_{\KK^l}$. Then $\delta$ is  said to be \textbf{homogeneous of polynomial degree} $d$ if $f_1, \dots, f_l$ are homogeneous polynomials of degree~$d$ in $S$. 
In this case, we write $\pdeg(\delta) = d$.

\begin{Definition} 
Let $\A$ be an arrangement in $\KK^l$. Define the \textbf{module of vector fields logarithmic tangent} to $\A$ (or logarithmic vector fields) by
$$D(\A) = \{\delta\in \Der_{\KK^l}~|~ \delta(\alpha_i) \in \ideal{\alpha_i} S, \forall i\}.$$
\end{Definition}

The module $D(\A)$ is obviously a graded $S$-module and we have that $$D(\A)= \{\delta\in \Der_{\KK^l}~|~ \delta(Q(\A)) \in \ideal{Q(\A)} S\}.$$ 

In particular, since the arrangement $\A$ is central, then the Euler vector field $\delta_E=\sum_{i=1}^lx_i\partial_{x_i}$ belongs to $D(\A)$, in fact
$\delta_E(Q(\A))=nQ(\A)$. 
In this case, we can write $D(\A)\cong S{\cdot}\delta_E\oplus D_0(\A)$, where $$D_0(\A)=\{\delta\in \Der_{\KK^l}~|~ \delta(Q(\A))=0\}.$$

\begin{Definition} 
An arrangement $\A$ in $\KK^l$ is said to be \textbf{free with exponents $(e_1,\dots,e_l)$} 
if and only if $D(\A)$ is a free $S$-module and there exists a basis $\delta_1,\dots,\delta_l \in D(\A)$ 
such that $\pdeg(\delta_i) = e_i$, or equivalently $D(\A)\cong\bigoplus_{i=1}^lS(-e_i)$.
\end{Definition}
In the rest of the paper, given a tuple of integers $(e_1,\dots,e_l)$, we will write $(e_1,\dots,e_l)_{\le}$, if we assume that $e_1\le e_2\le\cdots\le e_l$.

One of the most famous characterizations of freeness is due to Saito \cite{saito} and it uses  the determinant of the coefficient 
matrix of $\delta_1,\dots,\delta_l$ to check if the arrangement $\A$ is free or not. 

\begin{Theorem}[Saito's criterion]\label{theo:saitocrit}
Let $\A$ be an arrangement in $\KK^l$ and $\delta_1, \dots, \delta_l \in D(\A)$. Then the following facts are equivalent
\begin{enumerate}
\item $D(\A)$ is free with basis $\delta_1, \dots, \delta_l$, i. e. $D(\A) = S\cdot\delta_1\oplus \cdots \oplus S \cdot\delta_l$.
\item $\det(\delta_i(x_j))_{i,j}=c Q(\A)$, where $c\in \KK\setminus\{0\}$.
\item $\delta_1, \dots, \delta_l$ are linearly independent over $S$ and $\sum_{i=1}^l\pdeg(\delta_i)=n$.
\end{enumerate}
\end{Theorem}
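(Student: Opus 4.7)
The proof hinges on a central divisibility lemma: for any $\delta_1,\dots,\delta_l\in D(\A)$, the product $Q(\A)$ divides $\det(\delta_i(x_j))$. The plan for this is to fix a hyperplane $H_i$ with defining form $\alpha_i$ and perform a linear change of coordinates sending $\alpha_i$ to a new variable $y_1$. The logarithmic condition $\delta_k(\alpha_i)\in\ideal{\alpha_i}$ then says precisely that the $\partial_{y_1}$-coefficient of each $\delta_k$ is divisible by $y_1$, so the first column of the coefficient matrix in the new basis is divisible by $y_1=\alpha_i$, and hence so is its determinant. Since the determinants in the two coordinate systems differ by a nonzero scalar (the determinant of the linear change of variables), $\alpha_i\mid\det(\delta_i(x_j))$; pairwise coprimality of the linear forms $\alpha_i$ in the UFD $S$ then gives $Q(\A)\mid\det(\delta_i(x_j))$.

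With the lemma in hand, $(2)\Leftrightarrow(3)$ is a degree count: $\det(\delta_i(x_j))$ is homogeneous of degree $\sum_i\pdeg(\delta_i)$ while $\deg Q(\A)=n$, so linear independence combined with $\sum\pdeg(\delta_i)=n$ is equivalent to $\det(\delta_i(x_j))=cQ(\A)$ with $c\neq 0$. For $(2)\Rightarrow(1)$, given $\delta\in D(\A)$ with coefficients $(f_1,\dots,f_l)$, Cramer's rule over the fraction field of $S$ writes $\delta=\sum g_i\delta_i$ with $g_i=\det(M_i)/\det(M)$, where $M_i$ is obtained from $M=(\delta_k(x_j))$ by replacing its $i$-th row with $(f_1,\dots,f_l)$. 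Applying the divisibility lemma to the tuple $(\delta_1,\dots,\delta_{i-1},\delta,\delta_{i+1},\dots,\delta_l)\subset D(\A)$ gives $Q(\A)\mid\det(M_i)$, hence $g_i\in S$, and we conclude $D(\A)=\bigoplus_{i=1}^l S\delta_i$.

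The hardest implication, and where I expect the main obstacle, is $(1)\Rightarrow(2)$. Linear independence is automatic, and the lemma gives $\det(M)=Q(\A)h$ for some $h\in S$; the task is to show $h$ is a nonzero scalar. The plan is to assume $\alpha_i\mid h$ for some $i$ and derive a contradiction. In coordinates with $\alpha_i=y_1$, the hypothesis $\alpha_i^2\mid\det(M)$ forces the ``reduced'' local matrix, obtained by dividing the first column of the coefficient matrix by $y_1$, to be singular modulo $y_1$. This produces $c_1,\dots,c_l\in S$, not all in $\ideal{y_1}$, with $\sum c_k\delta_k=y_1\delta'$ for some derivation $\delta'$. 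A short coprimality check shows $\delta'\in D(\A)$, so it expands uniquely in the free basis as $\delta'=\sum b_k\delta_k$; the resulting identity $\sum c_k\delta_k=\sum(y_1b_k)\delta_k$ then forces $y_1\mid c_k$ for all $k$, contradicting the choice of $(c_k)$. Hence $\det(M)=cQ(\A)$ with $c\neq 0$, completing the equivalence and simultaneously giving $\sum\pdeg(\delta_i)=n$.
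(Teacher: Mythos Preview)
The paper does not prove Saito's criterion; it is stated as a classical result with a reference to Saito's original article (and implicitly to Orlik--Terao), so there is no in-paper proof to compare against. Your outline is the standard one, and the divisibility lemma together with the implications $(2)\Leftrightarrow(3)$ and $(2)\Rightarrow(1)$ via Cramer's rule are carried out correctly.

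There is, however, a genuine gap in your plan for $(1)\Rightarrow(2)$. After writing $\det M = Q(\A)\,h$, you propose to derive a contradiction from the assumption that $\alpha_i\mid h$ for some $i$. But ruling out $\alpha_i\mid h$ for every $i$ does \emph{not} by itself force $h$ to be a scalar: a priori $h$ could have an irreducible factor coprime to $Q(\A)$. One extra ingredient is needed. The standard fix (as in Orlik--Terao) is to note that $Q(\A)\,\partial_{x_j}\in D(\A)$ for each $j$; expanding these in the free basis $\delta_1,\dots,\delta_l$ yields a matrix identity $Q(\A)\,I_l = A\,M$, whence $Q(\A)^l=\det A\cdot\det M$ and therefore $h\mid Q(\A)^{l-1}$, so every prime factor of $h$ is indeed one of the $\alpha_i$. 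Alternatively, and closer in spirit to what you wrote, your contradiction scheme actually works for an \emph{arbitrary} irreducible $p\mid h$: if $p$ is coprime to $Q(\A)$ then already $p\mid\det M$ makes $M$ singular modulo $p$, giving $\sum_k c_k\delta_k = p\,\delta'$ with the $c_k$ not all in $\ideal{p}$; coprimality of $p$ with every $\alpha_j$ shows $\delta'\in D(\A)$, and expanding $\delta'$ in the basis forces $p\mid c_k$ for all $k$, the same contradiction. Either route closes the gap; as written, your plan stops one step short.
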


Using Saito's criterion one can prove the following result that will play an important role in Sections 4 and 5. Moreover, we will generalize it in Theorem~\ref{theo:localofplusoneisfreeplusone} to the case of plus-one generated arrangements. 
\begin{Theorem}[{\cite[Theorem 4.37]{orlterao}}]\label{theo:localisfree} If $\A$ is free, then $\A_X$ is free for any $X\in L(\A)$.
\end{Theorem}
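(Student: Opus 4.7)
The plan is to apply Saito's criterion (Theorem~\ref{theo:saitocrit}) to construct a Saito basis of $D(\A_X)$ directly out of one for $D(\A)$. Let $\delta_1,\dots,\delta_l$ be a basis of $D(\A)$ with $\pdeg(\delta_i)=e_i$, so that $\sum_{i=1}^l e_i=|\A|=n$. Because $\A_X\subseteq \A$, any derivation that stabilizes each ideal $\ideal{\alpha_H}$ for $H\in\A$ does so in particular for each $H\in\A_X$, hence $D(\A)\subseteq D(\A_X)$ and the $\delta_i$ lie in $D(\A_X)$ and remain $S$-linearly independent.

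These $\delta_i$ cannot themselves be a Saito basis of $\A_X$, since $\sum e_i=|\A|$ typically exceeds $|\A_X|$. The idea is therefore to divide each $\delta_i$ by a suitable factor of the polynomial $Q':=Q(\A)/Q(\A_X)=\prod_{H\in\A,\, X\not\subseteq H}\alpha_H$, whose degree equals $|\A|-|\A_X|$. Concretely, I plan to produce derivations $\delta_i'\in D(\A_X)$ together with homogeneous polynomials $g_i\in S$ such that $\delta_i=g_i\delta_i'$ and $\prod_{i=1}^l g_i$ is a nonzero scalar multiple of $Q'$. Once this is done, the coefficient matrix of $\delta_1',\dots,\delta_l'$ has determinant equal, up to a nonzero scalar, to $\det(\delta_i(x_j))/\prod g_i=Q(\A)/Q'=Q(\A_X)$, so Saito's criterion immediately yields freeness of $\A_X$ with exponents coming from $\pdeg(\delta_i)-\deg(g_i)$.

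The heart of the argument, and the step I expect to be the main obstacle, is establishing the required divisibility of the $\delta_i$ by factors of $Q'$. The natural tool is localization at a generic point $p\in X$, meaning a point of $X$ that lies on no hyperplane of $\A\setminus\A_X$. At the local ring $S_{\mathfrak{m}_p}$, every $\alpha_H$ with $H\notin\A_X$ becomes a unit, so the defining conditions for $D(\A)$ and $D(\A_X)$ coincide and $D(\A)_{\mathfrak{m}_p}=D(\A_X)_{\mathfrak{m}_p}$. Combining this local equality with the homogeneity of the $\delta_i$ (and, if convenient, first changing coordinates so that $X$ is the coordinate flat $\{x_1=\cdots=x_r=0\}$ with $r=\rk(X)$, so that the $\alpha_H$ for $H\in\A_X$ involve only $x_1,\dots,x_r$) forces the prescribed factorizations $\delta_i=g_i\delta_i'$ to hold globally in $S$. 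A possibly generic $S$-linear change of basis among the $\delta_i$ may be needed so that the factors $g_i$ distribute cleanly among the members of the basis rather than being absorbed into a single derivation.
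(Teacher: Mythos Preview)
Your plan diverges from the paper's own argument and, more importantly, has a genuine gap at the factorization step. The paper does not prove Theorem~\ref{theo:localisfree} where it is stated; its own proof is the one indicated in the Remark after Corollary~\ref{cor:projdimdisequal}: one localizes a minimal free resolution of $S/J(\A)$ at $I(X)$, uses $J(\A)S_{I(X)}=J(\A_X)S_{I(X)}$ to obtain a (generally non-minimal) free resolution of $S_{I(X)}/J(\A_X)S_{I(X)}$, and observes that a minimal resolution of $S/J(\A_X)$ remains minimal after the same localization. This yields $b_i(S/J(\A_X))\le b_i(S/J(\A))$ for all $i$, hence $\projdim(S/J(\A_X))\le\projdim(S/J(\A))$, and freeness of $\A_X$ follows from Terao's criterion. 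No Saito basis is ever constructed.

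The step in your outline that does not go through is the claim that the local equality $D(\A)_{\mathfrak m_p}=D(\A_X)_{\mathfrak m_p}$ ``forces'' factorizations $\delta_i=g_i\delta_i'$ with $\prod_i g_i=cQ'$. Since already $\delta_i\in D(\A)\subseteq D(\A_X)$, the trivial choice $g_i=1$ is always available; what you are really asserting is that the inclusion $D(\A)\hookrightarrow D(\A_X)$, written in suitable bases of two free modules, is given by a \emph{diagonal} matrix over $S$. There is no Smith normal form over $S=\KK[x_1,\dots,x_l]$ for $l\ge 2$, so this needs an argument, and a \emph{generic} $S$-linear change of basis is exactly wrong: it erases whatever common factors the coefficients of a derivation might have. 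For instance, for $Q(\A)=xy(x-y)$ in $\KK^2$ with $X=\{x=0\}$, neither $\delta_E=x\partial_x+y\partial_y$ nor $x^2\partial_x+y^2\partial_y$ has a non-unit content, and no generic combination of them does either; only the specific replacement $\theta\mapsto\theta-x\delta_E=y(y-x)\partial_y$ produces the desired factor. The way the localization is actually used in \cite{orlterao} is different: after arranging $I(X)=(x_1,\dots,x_s)$ one notes that $D(\A_X)$ is extended from $R=\KK[x_1,\dots,x_s]$, then the equality $D(\A)_{I(X)}=D(\A_X)_{I(X)}$ shows the essentialized module is free over $R_{(x_1,\dots,x_s)}$, and graded Nakayama over $R$ gives freeness. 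Your sketch points at the right local equality but substitutes an unjustified global factorization for this descent step.
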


Given an arrangement $\A$ in $\KK^l$, the \textbf{Jacobian ideal} of $\A$
is the ideal of $S$ generated by $Q(\A)$ and all its partial derivatives, and it is denoted by $J(\A)$.
$D_0(\A)$ can be identified with the first syzygies of $J(\A)$. In particular we have the following exact sequence
$$0\to D_0(\A)\to S(-n+1)^l\to J(\A)\to0.$$

The Jacobian ideal has a central role in the study of free arrangements, see \cite{orlterao}, \cite{Gin-freearr} and \cite{LefschetzPalez} for more details.
In fact, we can also characterize freeness by looking at $J(\A)$ via the Terao's criterion.
Notice that Terao described this result for characteristic $0$, but the statement holds true for any characteristic as shown in \cite{palezzato2018free}.

\begin{Theorem}[\cite{terao1980arrangementsI}]\label{theo:freCMcod2} 
An arrangement $\A$ in $\KK^l$ is free if and only if $S/J(\A)$ is $0$ or $(l-2)$-dimensional Cohen--Macaulay.
\end{Theorem}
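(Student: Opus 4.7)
The plan is to combine the two canonical short exact sequences attached to the Jacobian ideal with the Auslander--Buchsbaum formula, and to control $\dim(S/J(\A))$ using the singular locus of $Q(\A)$.

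First, I would establish the dimension bound $\dim(S/J(\A))\le l-2$ whenever $S/J(\A)\ne 0$. Because $\KK$ has characteristic zero and $\A$ is central, the Euler relation $\delta_E(Q(\A))=nQ(\A)$ forces $Q(\A)\in \ideal{\partial_{x_1}Q(\A),\dots,\partial_{x_l}Q(\A)}$, so $V(J(\A))$ is exactly the singular locus of $V(Q(\A))$. Since $Q(\A)$ is a product of distinct linear forms, this singular locus is $\bigcup_{i<j}(H_i\cap H_j)$, which is either empty (when $|\A|\le 1$, forcing $J(\A)=S$) or of pure dimension $l-2$.

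Next, I would compute the projective dimension of $S/J(\A)$ in terms of that of $D_0(\A)$. Splicing the exact sequence
$$0\to D_0(\A)\to S(-n+1)^l\to J(\A)\to 0$$
with the tautological $0\to J(\A)\to S\to S/J(\A)\to 0$ yields, by the long exact sequence of $\mathrm{Tor}$ (or by dimension shifting along a minimal free resolution of $D_0(\A)$), the identity
$$\projdim(S/J(\A))=\projdim(D_0(\A))+2,$$
valid whenever $J(\A)\ne S$.

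With these two inputs, the theorem follows from the Auslander--Buchsbaum formula applied to $S/J(\A)$ over the regular ring $S$:
$$\depth(S/J(\A))+\projdim(S/J(\A))=l.$$
Indeed, since $D_0(\A)$ is the complement of $S{\cdot}\delta_E$ inside $D(\A)$, the arrangement $\A$ is free if and only if $D_0(\A)$ is a free $S$-module, i.e.\ $\projdim(D_0(\A))=0$. By the previous paragraph this is equivalent to $\projdim(S/J(\A))=2$, hence to $\depth(S/J(\A))=l-2$. Together with $\dim(S/J(\A))\le l-2$ from the first step, this is precisely the Cohen--Macaulay condition of dimension $l-2$ (the case $S/J(\A)=0$ corresponds to $|\A|\le 1$, which is trivially free).

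The main obstacle is organizing the homological bookkeeping around the degenerate cases cleanly: one must exclude $J(\A)=S$ before invoking the projdim shift, and one must separately note that the sign of the dimension inequality $\dim(S/J(\A))\le l-2$ plus the Auslander--Buchsbaum inequality $\depth\le\dim$ together pin down equality, giving Cohen--Macaulayness rather than just an equality of depths.
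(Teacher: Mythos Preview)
Your argument is correct: the two short exact sequences linking $D_0(\A)$, $J(\A)$ and $S/J(\A)$, combined with the Auslander--Buchsbaum formula over the regular ring $S$ and the observation that $V(J(\A))$ is the codimension~$2$ singular locus of the reduced divisor $V(Q(\A))$, give exactly the equivalence between freeness of $D_0(\A)$ and the Cohen--Macaulayness of $S/J(\A)$ in dimension $l-2$. The bookkeeping in the degenerate cases $|\A|\le 1$ is also handled correctly.

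There is nothing to compare against, however: the paper does not supply a proof of this theorem. It is stated as a quoted result of Terao (with the remark that the extension to arbitrary characteristic is in \cite{palezzato2018free}), and is used only as a black box later on. What you have written is in fact the standard modern proof, and it coincides with the line of reasoning implicit in the paper's own use of projective dimension and Auslander--Buchsbaum (cf.\ the proof of Theorem~\ref{Theo:AssPrimeIdealNonFreeArr}); so while the paper offers no proof to match, your approach is entirely in its spirit.
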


\section{Plus-one generated arrangements}

In \cite{abe2018plus}, the author generalized the notions of free and nearly free (see \cite{dimca2015nearly}) to central and essential arrangements in any dimension. However, the definition can be given also for non-essential arrangements.


\begin{Definition}\label{def:plusonegen}
Let $\A=\{H_1,\dots,H_n\}$ be an arrangement in $\KK^l$. We say that $\A$ is \textbf{plus-one generated} with \textbf{exponents} $\POexp(\A)=(d_1,\dots,d_l)$ and \textbf{level} $d$ if $D(\A)$ has a minimal free resolution of the following form
\begin{equation}\label{eq:respogDA}
\xymatrix{0 \ar[r] & S(-d-1) \ar[rr]^-{(\alpha,f_1,\dots,f_l)} && S(-d) {\oplus}(\bigoplus_{i=1}^l S(-d_i)) \ar[r] & D(\A) \ar[r] & 0 }
\end{equation}
\end{Definition}

\begin{Remark} Let $\A$ be a plus-one generated arrangement in $\KK^l$ with exponents $\POexp(\A)=(d_1,\dots,d_l)_{\le}$ and level $d$. Since $\A$ is central, then there exists $k\ge2$ such that $(d_1,\dots,d_l)_{\le}=(0,\dots,0,1,d_k,\dots,d_l)_{\le}$. If $\A$ is essential, then $k=2$. If $\A$ is non-essential, then $k\ge3$.
\end{Remark}

Directly from the definition, we can show the following

\begin{Lemma}\label{lemma:fromresDAtoJac}
Let $\A=\{H_1,\dots,H_n\}$ be an arrangement in $\KK^l$. $\A$ is plus-one generated with exponents $\POexp(\A)=(d_1,\dots,d_l)_{\le}$ and level $d$
if and only if $S/J(\A)$ has a minimal free resolution of the form
\begin{equation}\label{eq:respogjac}
0{\to}S(-n-d){\to} S(-n-d+1){\oplus}(\bigoplus_{i=k}^{l} S(-n-d_i+1)) {\to} S(-n+1)^{l-k+2}{\to}S.
\end{equation}
Moreover, the map 
$$\partial_3\colon S(-n-d)\to S(-n-d+1)\oplus(\bigoplus_{i=k}^{l} S(-n-d_i+1))$$
is defined by a matrix of the form $(\alpha, f_{i_1},\dots,f_{i_{l-k+1}})$, where $1\le i_1<\cdots<i_{l-k+1}\le l$.
Notice that $l-k+2$ coincides with the codimension of the center of $\A$ or, equivalently, the rank of $\A$.
\end{Lemma}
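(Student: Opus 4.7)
The plan is to translate between resolutions of $D(\A)$ and $S/J(\A)$ using the exact sequence $0 \to D_0(\A) \to S(-n+1)^l \to J(\A) \to 0$ recalled in Section \ref{sec:arr}, together with the decomposition $D(\A) = S \cdot \delta_E \oplus D_0(\A)$ valid for central arrangements. Since the statement is an ``if and only if'' and minimal free resolutions are unique up to isomorphism, I would prove both directions by showing that each resolution determines the other through three reversible operations: splitting off free summands from $D(\A)$, a global degree shift by $n-1$, and splicing short exact sequences.

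Forward direction ($\Rightarrow$). Assume $\A$ is plus-one generated and begin with \eqref{eq:respogDA}. Split off from $D(\A)$ the free summand $S \cdot \delta_E \cong S(-1)$ together with the trivial derivations $\partial_{x_j}$ corresponding to the $k-2$ coordinates that do not occur in $Q(\A)$, which contribute an $S^{k-2}$. These account exactly for the ``$0,\dots,0,1$'' portion of $\POexp(\A)$. Writing $D(\A) = S(-1) \oplus S^{k-2} \oplus M$, minimality of \eqref{eq:respogDA} forces the unique syzygy to lie entirely in $M$ (free summands carry no syzygy), and deleting the corresponding columns gives the minimal resolution
$$0 \to S(-d-1) \to S(-d) \oplus \bigoplus_{i=k}^l S(-d_i) \to M \to 0.$$
After shifting this resolution by $n-1$ to match the internal grading on $S(-n+1)^l$, splice it with $0 \to M(-n+1) \hookrightarrow S(-n+1)^{l-k+2} \to J(\A) \to 0$, which is obtained from the sequence recalled in Section \ref{sec:arr} by dropping the $k-2$ trivial components $\partial_{x_j}Q(\A) = 0$. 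A final splicing with $0 \to J(\A) \to S \to S/J(\A) \to 0$ produces \eqref{eq:respogjac}. Minimality is preserved because $\alpha$ is a linear form, every minimal generator of $M$ has positive polynomial degree, and each non-zero $\partial_{x_i}Q(\A)$ has degree $n-1 \ge 1$.

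Converse direction ($\Leftarrow$). Each step above is reversible. Given \eqref{eq:respogjac}, the image of $S(-n+1)^{l-k+2} \to S$ must equal $J(\A)$, since the degree count and number of minimal generators force them to be the $r = l-k+2$ non-vanishing $\partial_{x_i}Q(\A)$. The kernel is $M(-n+1)$, so unshifting by $n-1$ and re-attaching the free summands $S(-1)$ and $S^{k-2}$ recovers \eqref{eq:respogDA}. The explicit form of $\partial_3$ then follows at once: the syzygy $(\alpha,f_1,\dots,f_l)$ in \eqref{eq:respogDA} satisfies $f_i = 0$ for every index $i$ whose summand belongs to the free part $S(-1) \oplus S^{k-2}$, so only the $l-k+1$ indices $i_1 < \cdots < i_{l-k+1}$ labelling the non-trivial generators contribute, giving $\partial_3 = (\alpha, f_{i_1},\dots,f_{i_{l-k+1}})$.

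The main bookkeeping obstacle is tracking the degree shift by $n-1$ between the polynomial-degree grading used on $D(\A)$ in \eqref{eq:respogDA} and the internal grading inherited from $S(-n+1)^l$ underlying \eqref{eq:respogjac}; once this convention is fixed, the argument reduces to concatenating the free-summand decomposition of $D(\A)$ with the two short exact sequences above.
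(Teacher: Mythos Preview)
Your proposal is correct and follows essentially the same strategy as the paper: use the decomposition $D(\A)\cong S\cdot\delta_E\oplus D_0(\A)$ together with the exact sequence $0\to D_0(\A)\to S(-n+1)^l\to J(\A)\to 0$, split off the free summands, and splice. The paper's proof is extremely terse---it records only the two key facts (the direct-sum decomposition, and that $g_0\delta_E+\sum g_i\delta_i=0$ with $\delta_i\in D_0(\A)$ forces $g_0=0$) and leaves the bookkeeping to the reader---whereas you spell out the degree shift by $n-1$, the cancellation of the $k-2$ trivial derivations, and the two splicings explicitly; but the underlying argument is the same.
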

\begin{proof}
This equivalence follows from the fact that $D(\A)\cong S\cdot \delta_E \oplus D_0(\A)$, and the fact that if $\delta_1,\dots,\delta_l\in D_0(\A)$ and there is a relation of the form
$g_0\delta_E+\sum_{i=1}^lg_i\delta_i=0$, with $g_i\in  S$, then $g_0=0$.
\end{proof}

\begin{Example}\label{EX:Arr in the picture}
Consider the arrangement $\A$ in $\CC^4$ with defining polynomial $x(x-y)(x-t)(y-z)(z-t)$. 
It is plus-one generated with $\POexp(\A)=(1,1,2,2)$ and level $2$. In fact $D(\A)$ has a minimal resolution of the form
\begin{equation*}
0\to S(-3)\to S(-1)^2\oplus S(-2)^3 \to D(\A).
\end{equation*}
On the other hand, the minimal resolution of $S/J(\A)$ is
\begin{equation*}
0\to S(-7)\to S(-5)\oplus S(-6)^3 \to S(-4)^4\to S.
\end{equation*}
\end{Example}

\begin{Example}\label{EX:ShiD}
Consider the arrangement $\A$ in $\CC^4$ with defining polynomial $(x+y)(x-z)(x+z)(y-z)(y+z)(x-y-t)(x+y-t)(x-z-t)(x+z-t)(y-z-t)(y+z-t)t$.
This arrangement is plus-one generated with $\POexp(\A)=(1,4,4,4)$ and level $5$ since the minimal resolution of $S/J(\A)$ is
\begin{equation*}
0\to S(-17)\to S(-15)^3\oplus S(-16)\to S(-11)^4\to S.
\end{equation*}
\end{Example}

Notice that for an arrangement $\A$ to be plus-one generated it is not enough that $D(\A)$ has projective dimension $1$ or equivalently that $S/J(\A)$ has projective dimension $3$.

\begin{Example}\label{EX:noPO}
Consider the arrangement $\A$ in $\CC^4$ with defining polynomial $xyzt(x+y-2z)(x-3y+z)(-5x+y+z)(x+y+z)$. It is not a plus-one generated  arrangement since the minimal resolution of $S/J(\A)$ is
\begin{equation*}
0 \to S(-13)\to S(-8)\oplus S(-11)^3\to S(-7)^4\to S.
\end{equation*}
\end{Example}

In \cite{abe2018plus}, Abe also described how free and plus-one generated arrangements are connected.

\begin{Theorem}[{\cite[Theorem 1.4]{abe2018plus}}]\label{theo:delisfreepog} Let $\A$ be an essential free hyperplane arrangement with exponents $(e_1, \ldots, e_l)$ and $H\in\A$. Then $\A\setminus\{H\}$ is free, or plus-one generated with exponents $(e_1, \ldots, e_l)$ and level $|\A\setminus\{H\}|-|\A^H|$.
\end{Theorem}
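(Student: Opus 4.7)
The plan is to analyze the inclusion $D(\A) \hookrightarrow D(\A')$ with $\A' = \A \setminus \{H\}$ by means of an evaluation map on $H$, and then to read off a minimal free resolution of $D(\A')$ from the freeness of $D(\A)$. Let $\alpha$ denote the defining linear form of $H$, write $\A'' = \A^H$, and fix a homogeneous basis $\delta_1, \dots, \delta_l$ of $D(\A)$ with $\pdeg(\delta_i) = e_i$. Since the logarithmic conditions along $\A'$ are a subset of those along $\A$, each $\delta_i$ lies in $D(\A')$, giving an inclusion $\bigoplus_{i=1}^l S(-e_i) \cong D(\A) \hookrightarrow D(\A')$. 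The graded $S$-linear map
$$
\rho \colon D(\A') \to S/\alpha S, \qquad \rho(\delta) = \delta(\alpha) \bmod \alpha
$$
has kernel equal to $D(\A)$ by the definitions of $D(\A)$ and $D(\A')$, producing a short exact sequence
$$
0 \to D(\A) \to D(\A') \xrightarrow{\rho} M \to 0, \qquad M := \mathrm{Im}(\rho) \subseteq S/\alpha S.
$$

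The central claim is that $M$ is a cyclic free $S/\alpha S$-module generated in degree $d := |\A'| - |\A''|$. For each flat $X \in \A''$, writing $m(X) = |\{H' \in \A' : H \cap H' = X\}|$, the logarithmic conditions $\delta(\alpha_{H'}) \in \alpha_{H'} S$ for the $m(X)$ hyperplanes $H'$ restricting to $X$ collapse, modulo $\alpha$, into the divisibility of $\rho(\delta)$ by $(\alpha_X|_H)^{m(X)-1}$. Taking the product over $X \in \A''$ yields $M \subseteq g \cdot (S/\alpha S)$, where $g = \prod_{X \in \A''} (\alpha_X|_H)^{m(X)-1}$ has degree $\sum_{X} (m(X) - 1) = |\A'| - |\A''| = d$; the reverse inclusion is obtained by producing an explicit lift $\theta \in D(\A')$ with $\pdeg(\theta) = d$ and $\rho(\theta) = g$, using Saito's criterion (Theorem \ref{theo:saitocrit}) applied to the enlarged family $\{\delta_1, \dots, \delta_l, \theta\}$ to verify $\theta$ is well defined as a logarithmic derivation.

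Having identified $M \cong (S/\alpha S)(-d)$, the minimal resolution $0 \to S(-d-1) \xrightarrow{\alpha} S(-d) \to M \to 0$ of $M$ combines with the freeness of $D(\A)$ through the horseshoe lemma to produce the resolution
$$
0 \to S(-d-1) \xrightarrow{(\alpha, f_1, \dots, f_l)} S(-d) \oplus \bigoplus_{i=1}^l S(-e_i) \to D(\A') \to 0,
$$
with $f_i \in S$ of degree $d+1-e_i$. If all $f_i$ lie in the maximal ideal of $S$, the resolution is minimal and $\A'$ is plus-one generated with exponents $(e_1, \dots, e_l)$ and level $d = |\A'| - |\A''|$ in the sense of Definition \ref{def:plusonegen}. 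Otherwise some $f_i$ with $e_i = d+1$ is a nonzero constant, and this entry lets us cancel the pair $(\theta, \delta_i)$, collapsing the resolution and leaving $D(\A')$ free.

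The main obstacle is the combinatorial identification of $M$ in the second paragraph: proving both that the tangent conditions force $\rho(\delta) \in g \cdot (S/\alpha S)$ and that the predicted element $g$ is actually hit by $\rho(\theta)$ for some $\theta \in D(\A')$ of the correct degree. This is where the essentiality of $\A$ enters, ensuring that the rank of $D(\A')$ is still $l$ and that the degree accounting $\sum_X (m(X)-1) = |\A'| - |\A''|$ closes up to produce exactly the level predicted by the statement.
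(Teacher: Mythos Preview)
The paper does not contain a proof of this theorem; it is quoted from \cite{abe2018plus} and used as input. So there is no ``paper's own proof'' to compare against. What I can do is assess your outline on its own merits and against the argument in Abe's paper, which your approach essentially mirrors: the short exact sequence $0\to D(\A)\to D(\A')\xrightarrow{\rho} M\to 0$, identification of the cokernel $M$ as $(S/\alpha S)(-d)$, and then horseshoe plus a minimality discussion. That skeleton is correct.

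There are, however, two genuine gaps in your second paragraph. First, the inclusion $M\subseteq g\cdot(S/\alpha S)$: simply ``reducing the logarithmic conditions modulo $\alpha$'' only yields divisibility of $\rho(\delta)$ by $\alpha_X$ to the first power, not to order $m(X)-1$. To get the higher-order vanishing you must use the full conditions $\delta(\alpha_{H'})\in\alpha_{H'}S$ in $S$, not just their reductions; concretely, one differentiates repeatedly along $X$ and uses that the resulting expressions, viewed as polynomials in the slope parameters $c_i$, vanish at $m(X)$ distinct values. This is not difficult, but it is not what you wrote. Second, and more seriously, the reverse inclusion: your appeal to ``Saito's criterion applied to the enlarged family $\{\delta_1,\dots,\delta_l,\theta\}$'' does not make sense. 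Saito's criterion concerns $l$ derivations and tests whether they form a basis of $D(\cdot)$; it neither constructs derivations nor applies to $l+1$ elements. What is actually needed is to exhibit some $\theta\in D(\A')\setminus D(\A)$ of degree exactly $d$; combined with $M\subseteq g\bar S$ and $\deg g=d$, this forces $\rho(\theta)$ to be a nonzero scalar multiple of $g$ and hence $M=g\bar S$. Producing such a $\theta$ (or, equivalently, showing $M_d\neq 0$) is the substantive step and is where the freeness of $\A$ is genuinely used; Abe's argument handles it via the structure of $\alpha\cdot D(\A')\subset D(\A)$ and the basis $\delta_1,\dots,\delta_l$.

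Finally, your remark that essentiality ``ensures the rank of $D(\A')$ is still $l$'' is off: $D(\A')$ always has rank $l$ over $S$. Essentiality is a standing hypothesis in Abe's formulation but plays no structural role in the exact-sequence argument you outline.
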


\begin{Theorem}[{\cite[Theorem 1.9]{abe2018plus}}] Let $\A$ be an essential arrangement and $H\in\A$. Assume that $\A \setminus \{H\}$ is free with exponents $(e_1,\dots, e_l)_{\le}$. If $|\A \setminus \{H\}|-|\A^H|\ge e_{l-2}$, then $\A$ is free, or plus-one generated with $\POexp(\A) =$ $ (e_1, \dots, e_{l-2}, e_{l-1} + 1, e_l + 1)$ and level $e_{l-1} +e_l - |\A|+|\A^H|+1$.
\end{Theorem}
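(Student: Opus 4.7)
My plan is to realize $D(\A)$ inside the free module $D(\A')$ and analyze the resulting kernel using the hypothesis. Because $\A'\subset\A$ as collections of hyperplanes, $D(\A)\subset D(\A')$ and in fact
$$D(\A)=\{\delta\in D(\A')\mid \delta(\alpha_H)\in\ideal{\alpha_H}\}.$$
Fix a free basis $\delta_1,\dots,\delta_l$ of $D(\A')$ with $\pdeg(\delta_i)=e_i$, chosen so that $\delta_1=\delta_E$. Setting $\psi_i:=\delta_i(\alpha_H)\bmod\alpha_H$ in the quotient $R=S/\ideal{\alpha_H}\cong\KK[x_1,\dots,x_{l-1}]$, each $\psi_i$ is homogeneous of degree $e_i$ and $\psi_1=0$. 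Thus $D(\A)$ is precisely the kernel of the $S$-linear map $\phi\colon\bigoplus_{i=1}^l S(-e_i)\to R$ sending $(g_1,\dots,g_l)\mapsto\sum g_i\psi_i$, and a minimal free resolution of $D(\A)$ can be read off from the syzygy structure of the homogeneous ideal $I=\ideal{\psi_2,\dots,\psi_l}\subset R$.

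Next I would use Ziegler's restriction: the ideal $I$ coincides with the defining ideal of the multi-restriction $(\A^H,m^*)$ with Euler multiplicity, and a standard count relates its minimal generators and their degrees to $|\A'|-|\A^H|$. Under the hypothesis $|\A'|-|\A^H|\ge e_{l-2}$, the lower-degree generators $\psi_2,\dots,\psi_{l-2}$ can be absorbed into $\ideal{\psi_{l-1},\psi_l}$ after a degree-preserving change of basis of $D(\A')$. Concretely, I would inductively replace $\delta_i$ by $\delta_i-a_{i,l-1}\delta_{l-1}-a_{i,l}\delta_l$ for suitable $a_{i,*}\in S$ so that the modified $\delta_i$ lies in $D(\A)$ for $i=1,\dots,l-2$, contributing $l-2$ free generators of $D(\A)$ of degrees $e_1,\dots,e_{l-2}$.

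The problem then reduces to analyzing the kernel of the restricted map $S(-e_{l-1})\oplus S(-e_l)\to R$ sending $(g_{l-1},g_l)\mapsto g_{l-1}\psi_{l-1}+g_l\psi_l$. Either $\psi_{l-1},\psi_l$ form a regular sequence in $R$---in which case the only syzygy is the Koszul one, Saito's criterion (Theorem~\ref{theo:saitocrit}) applies, and $\A$ turns out free with $e_l$ incremented by $1$---or there is a minimal relation $h_{l-1}\psi_{l-1}+h_l\psi_l\equiv 0$ in $R$, i.e.\ $h_{l-1}\delta_{l-1}(\alpha_H)+h_l\delta_l(\alpha_H)=\alpha_H h$ in $S$. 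Lifting yields one extra generator $\delta'\in D(\A)$ together with a single syzygy among the $l+1$ generators $\delta_E$, the $l-2$ modified derivations, $\widetilde{\delta}_{l-1}$, $\widetilde{\delta}_l$, $\delta'$, where $\widetilde{\delta}_{l-1}$ and $\widetilde{\delta}_l$ have degrees $e_{l-1}+1$ and $e_l+1$ respectively. This matches the resolution of Definition~\ref{def:plusonegen} with $\POexp(\A)=(e_1,\dots,e_{l-2},e_{l-1}+1,e_l+1)$, and a direct degree count of the syzygy $(h,h_{l-1},h_l)$ gives level $e_{l-1}+e_l-|\A|+|\A^H|+1$.

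The main obstacle is the second step: making the ``absorption'' of the lower-degree generators $\psi_2,\dots,\psi_{l-2}$ rigorous. A naive degree comparison is not enough; one needs the hypothesis $|\A'|-|\A^H|\ge e_{l-2}$ combined with either a Cohen--Macaulayness argument on $R/I$ (in the spirit of Theorem~\ref{theo:freCMcod2}) or a direct analysis of the Euler multiplicity to guarantee $\psi_i\in\ideal{\psi_{l-1},\psi_l}$ in $R$ for $i\le l-2$. This is precisely where the numerical hypothesis enters: without it, $I$ could require more minimal generators and $D(\A)$ would acquire additional syzygies, destroying the plus-one conclusion.
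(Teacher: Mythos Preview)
The paper does not contain a proof of this theorem: it is quoted verbatim from \cite[Theorem 1.9]{abe2018plus} and used only as background in Section~3, so there is no in-paper argument to compare your proposal against.

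On the merits of your outline: the overall architecture---embedding $D(\A)$ in the free module $D(\A')$, passing to the evaluation map $\delta\mapsto\delta(\alpha_H)\bmod\alpha_H$, and reducing to the syzygy structure of the image ideal in $R=S/\ideal{\alpha_H}$---is the standard one and is essentially what Abe does. However, you yourself flag the real gap: the claim that $\psi_2,\dots,\psi_{l-2}\in\ideal{\psi_{l-1},\psi_l}$ is not a consequence of the degree inequality $|\A'|-|\A^H|\ge e_{l-2}$ alone, and invoking ``Ziegler's restriction'' or ``Cohen--Macaulayness of $R/I$'' here is a placeholder rather than an argument. In Abe's proof the mechanism is different: one does \emph{not} absorb the low-degree $\psi_i$ into $\ideal{\psi_{l-1},\psi_l}$. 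Instead one shows directly, using the freeness of the (multi)restriction module $D(\A^H,m^*)$ and the numerical hypothesis, that a suitable basis of $D(\A')$ already has $\delta_1,\dots,\delta_{l-2}\in D(\A)$; the remaining two generators are then handled by a short Koszul/regular-sequence dichotomy as you sketch. Until you supply that missing step---or an honest substitute---your proposal is an outline with an unresolved core, not a proof.
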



\section{Associated prime ideals of $S/J(\A)$}
Let $\A$ be an arrangement in $\KK^l$ and $k\ge 2$. To each $X\in L(\A)_k$ corresponds a prime ideal $I(X)=\ideal{\alpha_{i_1},\dots,\alpha_{i_k}}$ of codimension $k$ of $S=\KK[x_1,\dots,x_l]$ that contains $J(\A)$, where $X=H_{i_1}\cap\cdots\cap H_{i_k}$. 


\begin{Remark}\label{REM:EmbeddedPrime}
Let $\A$ be an arrangement in $\KK^l$. There is a bijection between $L(\A)_2$ and the set of associated prime ideals of $S/J(\A)$ of codimension $2$. This is because each associated prime ideal of $S/J(\A)$ of codimension $2$ corresponds to an irreducible component of the singular locus of $\A$. Moreover, any associated prime ideal of $S/J(\A)$ of codimention $k$ corresponds to an element of $L(\A)_k$. (For details see Section~3.8 of \cite{eisenbud}). This gives us the following inclusions
$$\{I(X) ~|~ X\in L(\A)_2\}\subseteq\Ass_S(S/J(\A))\subseteq \{I(X) ~|~ X\in L(\A)_{\ge2}\}.$$
\end{Remark}


\begin{Theorem}\label{THM:AssPrimeAndLocal}
Let $\A$ be an arrangement in $\KK^l$, and $X,Y\in L(\A)$ such that $X\subseteq Y$.
$$I(Y)\in\Ass_S(S/J(\A)) \Longleftrightarrow I(Y)\in\Ass_S(S/J(\A_X)).$$
\end{Theorem}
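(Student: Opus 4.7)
The plan is to prove both equivalences by passing to the localization at $I(X)$, where the two Jacobian ideals will coincide and associated primes can be transferred back and forth via the standard localization correspondence. Since $X\subseteq Y$ as flats, every hyperplane containing $Y$ also contains $X$; hence $\A_Y\subseteq\A_X$, and so $I(Y)\subseteq I(X)$ as ideals of $S$. This inclusion is exactly what allows localization at $I(X)$ to preserve information about $I(Y)$.

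First I will show that $J(\A)\,S_{I(X)}=J(\A_X)\,S_{I(X)}$. Write $Q(\A)=Q(\A_X)\cdot R$, where $R=\prod_{H\notin\A_X}\alpha_H$. Each linear factor $\alpha_H$ of $R$ corresponds to some hyperplane $H\notin\A_X$, which by the very definition of $\A_X$ does not contain $X$; consequently $\alpha_H\notin I(X)$, since $I(X)$ is the prime ideal of the flat $X$. Therefore $R$ is a unit in $S_{I(X)}$. The Leibniz rule
$$\partial_i Q(\A)=R\,\partial_i Q(\A_X)+Q(\A_X)\,\partial_i R,$$
together with the invertibility of $R$, then forces $J(\A)$ and $J(\A_X)$ to generate the same ideal of $S_{I(X)}$, so that $(S/J(\A))_{I(X)}\cong(S/J(\A_X))_{I(X)}$.

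Next I will invoke the standard fact that, for a finitely generated $S$-module $M$ and primes $P\subseteq Q$ of the Noetherian ring $S$, $P\in\Ass_S(M)$ if and only if $PS_Q\in\Ass_{S_Q}(M_Q)$ (this is exactly the kind of statement collected in Section~3.8 of \cite{eisenbud}, already cited in Remark~\ref{REM:EmbeddedPrime}). Applying it with $P=I(Y)$ and $Q=I(X)$, and using $I(Y)\subseteq I(X)$, to both $M=S/J(\A)$ and $M=S/J(\A_X)$ shows that the two conditions in the theorem are each equivalent to
$$I(Y)\,S_{I(X)}\in\Ass_{S_{I(X)}}\bigl((S/J(\A))_{I(X)}\bigr),$$
and hence to each other.

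The only non-trivial technical point is the equality $J(\A)\,S_{I(X)}=J(\A_X)\,S_{I(X)}$, since everything else is a direct invocation of well-known commutative-algebra machinery. Once $R$ is identified as a unit in $S_{I(X)}$, however, this reduces to routine bookkeeping with the product rule, so I do not anticipate a genuine obstacle.
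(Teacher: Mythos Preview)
Your proposal is correct and follows essentially the same route as the paper: localize $S$ at $I(X)$, use that $J(\A)S_{I(X)}=J(\A_X)S_{I(X)}$, and then transfer associated primes through the localization correspondence (the paper cites \cite[Theorem~3.1]{eisenbud} for this last step). Your write-up is in fact more explicit than the paper's, since you spell out via the Leibniz rule why the two Jacobian ideals agree after localizing and you justify the inclusion $I(Y)\subseteq I(X)$, both of which the paper leaves implicit.
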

\begin{proof}
Consider $S_{I(X)}$ the localization of the ring $S$ by the ideal $I(X)$. We have that $J(\A)S_{I(X)}=J(\A_X)S_{I(X)}$. Since the ideals of $S_{I(X)}$ are in bijection with the ideal of $S$ contained in $I(X)$, by \cite[Theorem~3.1]{eisenbud}, $I(Y)\in \Ass_S(S/J(\A))$ if and only if $I(Y)S_{I(X)} \in \Ass_{S_{I(X)}}(S_{I(X)}/J(\A)S_{I(X)})=\Ass_{S_{I(X)}}(S_{I(X)}/J(\A_X)S_{I(X)})$ if and only if $I(Y)\in\Ass_S(S/J(\A_X)).$
\end{proof}

\begin{Corollary}\label{COR:AssPrimeInclusion}
Let $\A$ be an arrangement in $\KK^l$. Then 
$$\Ass_S(S/J(\A)){\subseteq} \{I(X) ~|~ X{\in} L(\A)_2\}{\cup}\{I(X) ~|~ X{\in} L(\A)_{\ge3}, \A_X \textit{ non-free} \}.$$
\end{Corollary}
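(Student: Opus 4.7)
The goal is to prove that if $I(X)\in\Ass_S(S/J(\A))$ for some $X\in L(\A)_{\ge 3}$, then the localization $\A_X$ must be non-free; the case $X\in L(\A)_2$ is already absorbed into the first piece of the union.

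The plan is to take an arbitrary associated prime $P\in\Ass_S(S/J(\A))$ and combine the two earlier results in the section. By Remark \ref{REM:EmbeddedPrime}, we automatically have $P=I(X)$ for some $X\in L(\A)_{\ge2}$, so we only need to handle the case $\rk(X)=k\ge 3$. Applying Theorem \ref{THM:AssPrimeAndLocal} with $Y=X$ (which is legal since $X\subseteq X$) immediately transfers the information to the localization:
\begin{equation*}
I(X)\in\Ass_S(S/J(\A))\ \Longleftrightarrow\ I(X)\in\Ass_S(S/J(\A_X)).
\end{equation*}

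Next I would argue by contradiction: assume $\A_X$ is free. Terao's criterion (Theorem \ref{theo:freCMcod2}) then forces $S/J(\A_X)$ to be either zero or Cohen--Macaulay of dimension $l-2$. The zero case is excluded because $\Ass_S(S/J(\A_X))$ is non-empty (it contains $I(X)$). In the Cohen--Macaulay case, every associated prime of $S/J(\A_X)$ must have codimension exactly $2$. But $I(X)$ has codimension $\rk(X)=k\ge 3$, contradicting the Cohen--Macaulay conclusion. Hence $\A_X$ is non-free, which places $I(X)$ in the second set of the union.

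The argument is short and the potential obstacle is mostly bookkeeping: making sure that Theorem \ref{THM:AssPrimeAndLocal} is applicable (it only requires $X\subseteq Y$, trivially satisfied when $X=Y$), and that Terao's criterion as stated in Theorem \ref{theo:freCMcod2} really prohibits associated primes of codimension $\ge 3$ for a free arrangement. Both are immediate, so no serious technical hurdle is expected.
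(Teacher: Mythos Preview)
Your argument is correct and follows essentially the same route as the paper: use Remark~\ref{REM:EmbeddedPrime} to write an associated prime as $I(X)$, apply Theorem~\ref{THM:AssPrimeAndLocal} with $Y=X$ to pass to $\A_X$, and then invoke Terao's criterion to rule out freeness when $\rk(X)\ge 3$. The paper phrases the last step contrapositively (``$I(X)\in\Ass_S(S/J(\A_X))$ implies $S/J(\A_X)$ is not Cohen--Macaulay, hence $\A_X$ is non-free''), but the content is identical.
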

\begin{proof}
From Remark~\ref{REM:EmbeddedPrime}, every ideal in $\Ass_S(S/J(\A))$ is of the form $I(X)$ for some $X\in L(\A)_{\ge2}$. If $X\in L(\A)_{\ge3}$, then by Theorem~\ref{THM:AssPrimeAndLocal} $I(X)\in \Ass_S(S/J(\A_X))$. This implies that $S/J(\A_X)$ is not Cohen--Macaulay and hence $\A_X$ is non-free.
\end{proof}

The previous inclusion might not be an equality in general.
\begin{Example}[{c.f. \cite[Example 8.5]{dimca2017hyperplane}}] \label{EX:DimcaNONonly2flats}
Consider the arrangement $\A$ in $\CC^4$ with defining polynomial $Q(\A)=\prod_{a=(a_0,a_1,a_2,a_3)} (a_0x_0+a_1x_1+a_2x_2+a_3x_3)$, where $a\in\{0,1\}^4$ and $a\ne(0,0,0,0)$. This is plus-one generated with $\POexp(\A)=(1,5,5,5)$ and level $5$. In fact, $S/J(\A)$ has minimal resolution
\begin{equation*}
0\to S(-20)\to S(-19)^4\to S(-14)^4\to S.
\end{equation*}
A direct computation shows that $\Ass_S(S/J(\A))= \{I(X) ~|~ X\in L(\A)_2\}$. However, if we consider $X\in L(\A)_4$ the intersection of the hyperplanes $x_1=0,x_2=0,x_3=0,x_4=0$, then $\A_X=\A$, and hence $\A_X$ is not free.
\end{Example}


Since it is known by Theorem~\ref{theo:localisfree} that if $\A$ is a free arrangement then the localization $\A_X$ is free for any $X\in L(\A)$, by Theorem~\ref{THM:AssPrimeAndLocal} and Remark~\ref{REM:EmbeddedPrime}, we have the following.

\begin{Corollary}\label{Corol:assprimefreearr}
Let $\A$ be a free arrangement. Then 
$$\Ass_S(S/J(\A)) = \{I(X) ~|~ X\in L(\A)_2\}.$$
\end{Corollary}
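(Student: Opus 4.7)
The plan is to combine the three ingredients already at hand: the inclusion chain of Remark~\ref{REM:EmbeddedPrime}, the localization-of-associated-primes identity from Theorem~\ref{THM:AssPrimeAndLocal}, and the fact that freeness is preserved under localization (Theorem~\ref{theo:localisfree}). The forward inclusion $\{I(X) \mid X \in L(\A)_2\} \subseteq \Ass_S(S/J(\A))$ is immediate from Remark~\ref{REM:EmbeddedPrime} and uses no hypothesis; so the entire content of the corollary is the reverse inclusion.

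For the reverse inclusion, I would take an arbitrary $P \in \Ass_S(S/J(\A))$. By Remark~\ref{REM:EmbeddedPrime} it must be of the form $I(Y)$ for some $Y \in L(\A)_k$ with $k \ge 2$, and the goal is to rule out $k \ge 3$. Suppose for contradiction that $k \ge 3$. Apply Theorem~\ref{THM:AssPrimeAndLocal} with the choice $X = Y$ (so trivially $X \subseteq Y$): then $I(Y) \in \Ass_S(S/J(\A_Y))$. But by Theorem~\ref{theo:localisfree}, the localization $\A_Y$ is free, and by Theorem~\ref{theo:freCMcod2} the quotient $S/J(\A_Y)$ is either zero or Cohen--Macaulay of dimension $l-2$, hence has no embedded primes and every associated prime has codimension exactly $2$. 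This contradicts $\codim I(Y) = k \ge 3$.

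Equivalently, one can cut the argument shorter by quoting Corollary~\ref{COR:AssPrimeInclusion} directly: freeness of $\A$ forces $\A_X$ to be free for every $X \in L(\A)_{\ge 3}$ via Theorem~\ref{theo:localisfree}, so the second set in the union of Corollary~\ref{COR:AssPrimeInclusion} is empty, which together with Remark~\ref{REM:EmbeddedPrime} yields the claimed equality.

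No step here looks like a real obstacle: all the machinery is in place, and the proof is essentially a two-line deduction. The only thing worth a line of care is making sure that the use of Theorem~\ref{THM:AssPrimeAndLocal} is legal in the edge case $X = Y$, and that the Cohen--Macaulay conclusion from Theorem~\ref{theo:freCMcod2} is applied to $\A_Y$ rather than to $\A$ itself.
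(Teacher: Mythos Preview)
Your argument is correct and matches the paper's approach exactly: the paper simply states the corollary as an immediate consequence of Theorem~\ref{theo:localisfree}, Theorem~\ref{THM:AssPrimeAndLocal}, and Remark~\ref{REM:EmbeddedPrime}, which is precisely the combination you spell out (your alternative via Corollary~\ref{COR:AssPrimeInclusion} is the same argument packaged one step earlier).
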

\begin{Remark} By Example~\ref{EX:DimcaNONonly2flats}, the statement of Corollary~\ref{Corol:assprimefreearr} is not an equivalence.
\end{Remark}

We are now ready to state the main result of this section.
\begin{Theorem}\label{Theo:AssPrimeIdealNonFreeArr}
Let $\A$ be an arrangement in $\KK^l$ such that $S/J(\A)$ has projective dimension $3$. Then 
$$\Ass_S(S/J(\A)) = \{I(X) ~|~ X{\in} L(\A)_2\}{\cup}\{I(X) ~|~ X{\in} L(\A)_3, \A_X \textit{ non-free} \}.$$
In particular, this holds if $\A$ is a plus-one generated arrangement.
\end{Theorem}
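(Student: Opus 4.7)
The plan is to combine Corollary~\ref{COR:AssPrimeInclusion} with the projective-dimension hypothesis via Auslander--Buchsbaum for one inclusion, and to reduce to the rank-three case via Theorem~\ref{THM:AssPrimeAndLocal} and Terao's criterion for the other.

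For the inclusion $\subseteq$, I would first sharpen Corollary~\ref{COR:AssPrimeInclusion} by bounding $\codim(P) \le 3$ for every $P \in \Ass_S(S/J(\A))$. Localizing at $P$ yields $\depth_{S_P}((S/J(\A))_P) = 0$ together with $\projdim_{S_P}((S/J(\A))_P) \le \projdim_S(S/J(\A)) = 3$; since $S_P$ is a regular local ring, Auslander--Buchsbaum then gives $\codim(P) = \depth(S_P) \le 3$. Combined with Corollary~\ref{COR:AssPrimeInclusion}, this is exactly the stated inclusion.

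For $\supseteq$, the codimension-$2$ part is Remark~\ref{REM:EmbeddedPrime}, so the only thing left is to prove that for each $X \in L(\A)_3$ with $\A_X$ non-free, $I(X) \in \Ass_S(S/J(\A))$. By Theorem~\ref{THM:AssPrimeAndLocal}, this reduces to $I(X) \in \Ass_S(S/J(\A_X))$. The key structural observation is that $X$ is the center of $\A_X$: every flat of $\A_X$ contains $X$, so $L(\A_X)_{\ge 2} = L(\A_X)_2 \cup \{X\}$. Because $\A_X$ has at least three distinct hyperplanes, any two of them meet in a codimension-$2$ component of the singular locus, so $\dim(S/J(\A_X)) = l-2$. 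Hence the minimal associated primes of $S/J(\A_X)$ are precisely the $I(Y)$ with $Y \in L(\A_X)_2$, and by Remark~\ref{REM:EmbeddedPrime} the only possible embedded associated prime is $I(X)$.

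Now Terao's criterion (Theorem~\ref{theo:freCMcod2}) says $\A_X$ is free if and only if $S/J(\A_X)$ is Cohen--Macaulay of dimension $l-2$, which is equivalent to having no embedded primes. Taking the contrapositive, non-freeness of $\A_X$ forces $I(X) \in \Ass_S(S/J(\A_X))$, completing the proof. The ``In particular'' clause is immediate from Lemma~\ref{lemma:fromresDAtoJac}, whose minimal free resolution has length $3$, so $\projdim_S(S/J(\A)) = 3$ for any plus-one generated $\A$. The step I expect to require the most care is narrowing the embedded-prime candidates of $S/J(\A_X)$ to $\{I(X)\}$, which rests on the coincidence of $X$ with the center of $\A_X$; once that is in place, Terao's criterion closes the argument mechanically.
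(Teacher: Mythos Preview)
Your argument follows the same two-step strategy as the paper: bound the codimension of associated primes via Auslander--Buchsbaum for $\subseteq$, and reduce to $\A_X$ via Theorem~\ref{THM:AssPrimeAndLocal} for $\supseteq$. The only substantive difference is how you extract $I(X)\in\Ass_S(S/J(\A_X))$ from non-freeness of the rank-$3$ localization $\A_X$: the paper quotes a result of Dimca--Sticlaru (that a rank-$3$ arrangement is free iff $J(\A_X)$ equals its saturation at $I(X)$), whereas you invoke Terao's criterion and then assert that Cohen--Macaulayness of $S/J(\A_X)$ ``is equivalent to having no embedded primes.''

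That last equivalence is where the proposal has a gap. Unmixedness does not imply Cohen--Macaulayness in general (for instance $\KK[x,y,z,w]/\bigl((x,y)\cap(z,w)\bigr)$ is unmixed of dimension $2$ but has depth $1$), so ``no embedded primes $\Rightarrow$ CM'' is not automatic even after you have pinned the only embedded-prime candidate down to $I(X)$. What makes it true here is precisely the rank-$3$ structure you already isolated: after a linear change of coordinates $Q(\A_X)\in\KK[x_1,x_2,x_3]$, so $S/J(\A_X)$ is a polynomial extension of a one-dimensional graded $\KK$-algebra, and for such algebras $\depth\in\{0,1\}$, whence CM $\Leftrightarrow$ the irrelevant ideal is not associated $\Leftrightarrow$ no embedded primes. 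Equivalently, $\projdim_S(S/J(\A_X))\le 3$ because $J(\A_X)$ is extended from three variables, so Auslander--Buchsbaum forces $\depth(S/J(\A_X))\in\{l-3,l-2\}$, and the value $l-3$ occurs exactly when $I(X)$ is associated. Once this sentence is inserted, your proof is complete and is in fact more self-contained than the paper's, since it avoids the external citation.
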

\begin{proof}
By Auslander--Buchsbaum formula (\cite[Theorem~19.9]{eisenbud}), we have that $\depth(S/J(\A))=l-3$. Since the depth of a module is bounded above by the dimension of its associated prime ideals, we have that $S/J(\A)$ cannot have associated prime ideals of codimension $k$ with $k\ge4$. 
From Corollary~\ref{COR:AssPrimeInclusion}, we have the inclusion ``$\subseteq$".
Let $X\in L(\A)_3$ be such that $\A_X$ is non-free. Since $\A_X$ is an arrangement of rank $3$, $J(\A_X)$ coincides with its saturation with respect to the ideal $I(X)$ if and only if $\A_X$ is free, as described in the introduction of \cite{dimca2015nearly}. Since we assume that $\A_X$ is non-free, this implies that $I(X)$ is an associated prime ideal of $S/J(\A_X)$. By Theorem~\ref{THM:AssPrimeAndLocal}, $I(X)\in \Ass_S(S/J(\A))$.
\end{proof}

\begin{Example}
Consider the hyperplane arrangement in Example~\ref{EX:Arr in the picture}. Then the associated prime ideals of $S/J(\A)$ correspond to all rank $2$ flats in $L(\A)$ and the rank $3$ flat corresponding to the intersection of the hyperplanes with equation $y=z$, $x=t$ and $z=t$.
\end{Example}

\begin{Example}\label{EX:ShiDbis}
Consider the hyperplane arrangement in Example~\ref{EX:ShiD}. Then the associated prime ideals of $S/J(\A)$ correspond to all rank $2$ flats in $L(\A)$ and the ideals $\ideal{y+z, x+z,t}$, $\ideal{y-z, x-z,t}$,  and $\ideal{y-t,x-t,z}$.
\end{Example}

\begin{Example}
Consider the arrangement $\A$ in $\CC^4$ with defining polynomial $t(x+y+z)(2x+4y+5z)(x+4y-5z)(-3x+5y+z)(2x+7y+2z)(3x-4y+9z)$. This is not plus-one generated because $S/J(\A)$ has a minimal free resolution
\begin{equation*}
0\to S(-11)^3\to S(-7) \oplus S(-10)^5\to S(-6)^4\to S.
\end{equation*}
However, $S/J(\A)$ has only $\ideal{x,y,z}$ as embedded prime ideal.
\end{Example}


\section{Localization of plus-one generated arrangements}

The goal of this section is to describe the localization of plus-one generated arrangements.
The first step is to relate the algebraic total Betti numbers of an arrangement and the ones of its localization.

\begin{Proposition}\label{prop:localBettinumbers} Let $\A$ be an arrangement in $\KK^l$ and $X\in L(\A)$. Then for all $i\ge0$, we have $$b_i(S/J(\A_X))\le b_i(S/J(\A)) \text{ and }$$
$$b_i(D(\A_X))\le b_i(D(\A)),$$ where $b_i(-)$ are the algebraic total Betti numbers.
\end{Proposition}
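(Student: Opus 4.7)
The plan is to prove both inequalities simultaneously by factoring each through the localization at $I(X)$. Concretely, I aim to establish the chain
$$b_i(D(\A_X)) = b_i^{S_{I(X)}}(D(\A_X)_{I(X)}) = b_i^{S_{I(X)}}(D(\A)_{I(X)}) \le b_i(D(\A)),$$
and the analogous chain for $S/J(\A_X)$ and $S/J(\A)$, where $b_i^{S_{I(X)}}$ denotes the total Betti number of the minimal free resolution over the local ring $S_{I(X)}$.

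The rightmost inequality is the general fact that Betti numbers cannot increase under localization: if $F_\bullet$ is the minimal graded free $S$-resolution of a finitely generated graded $S$-module $M$, then $(F_\bullet)_{I(X)}$ is an $S_{I(X)}$-free resolution of $M_{I(X)}$ whose ranks agree with those of $F_\bullet$, and passing to a minimal resolution over $S_{I(X)}$ can only shrink them. The middle equality is the content of the proof of Theorem~\ref{THM:AssPrimeAndLocal}: one has $J(\A)_{I(X)}=J(\A_X)_{I(X)}$, and likewise $D(\A)_{I(X)}=D(\A_X)_{I(X)}$, since for every $H\in\A\setminus\A_X$ the defining form $\alpha_H$ does not lie in $I(X)$ and thus becomes a unit after localization, making the constraint $\delta(\alpha_H)\in\alpha_H S_{I(X)}$ automatic.

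The leftmost equality is the main obstacle, since localization at a non-maximal prime typically decreases Betti numbers strictly; the point is that for $D(\A_X)$ and $S/J(\A_X)$ this loss does not occur. Because $X$ is the center of $\A_X$, after a linear change of coordinates we may assume $X=\{x_1=\cdots=x_k=0\}$ with $k=\rk X$. Every hyperplane in $\A_X$ contains $X$, so $Q(\A_X)\in S''=\KK[x_1,\dots,x_k]$, $\A_X$ is the pull-back of an essential arrangement $\A_X''$ in $\KK^k$, and one obtains the decompositions $J(\A_X)=J(\A_X'')\,S$ and
$$D(\A_X)\cong\bigl(D(\A_X'')\otimes_{S''}S\bigr)\oplus\bigoplus_{j=k+1}^{l}S\,\partial_{x_j}.$$
Since $S$ is a polynomial, hence free, extension of $S''$, the minimal $S''$-resolutions of $D(\A_X'')$ and $S''/J(\A_X'')$ base-change to minimal graded $S$-resolutions of $D(\A_X'')\otimes_{S''}S$ and $S/J(\A_X)$: their differentials have entries in $\mathfrak{m}''=(x_1,\dots,x_k)$, whose extension to $S$ is exactly $I(X)$. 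Because the entries of these differentials already lie in $I(X)$, they remain in the maximal ideal $I(X)S_{I(X)}$ after localizing, so the localized resolutions stay minimal over $S_{I(X)}$; this yields $b_i(D(\A_X))=b_i^{S_{I(X)}}(D(\A_X)_{I(X)})$ and $b_i(S/J(\A_X))=b_i^{S_{I(X)}}((S/J(\A_X))_{I(X)})$ for every $i\ge 0$, closing the chain.
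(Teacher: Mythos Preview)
Your proof is correct and follows essentially the same route as the paper: change coordinates so that $I(X)=(x_1,\dots,x_k)$, use $J(\A)S_{I(X)}=J(\A_X)S_{I(X)}$ to identify the localized modules, note that localization can only decrease Betti numbers, and then argue that since $Q(\A_X)\in\KK[x_1,\dots,x_k]$ the minimal resolution of $S/J(\A_X)$ already has all its entries in $I(X)$, so it remains minimal after localizing at $I(X)$. The only cosmetic difference is that the paper handles the inequality for $D(\A)$ by reducing it to the one for $S/J(\A)$ via the identification $D(\A)\cong S\cdot\delta_E\oplus D_0(\A)$ with $D_0(\A)$ the first syzygy of $J(\A)$ (the content behind Lemma~\ref{lemma:fromresDAtoJac}), whereas you treat $D(\A_X)$ directly through the splitting $D(\A_X)\cong (D(\A_X'')\otimes_{S''}S)\oplus\bigoplus_{j>k}S\,\partial_{x_j}$.
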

\begin{proof} If $X\in L(\A)$ is such that $\A=\A_X$, the statement is obviously true. Assume $\A_X\subsetneq\A$. Without loss of generalities, we can make a change of coordinates and assume that $I(X)=(x_1, \dots, x_s)$ for some $1\le s\le l-1$. Consider $\mathcal{F}_{\A}$
a minimal free resolution of $S/J(\A)$. As in the proof of Theorem~\ref{THM:AssPrimeAndLocal}, $J(\A)S_{I(X)}=J(\A_X)S_{I(X)}$, and hence $(S/J(\A))_{I(X)}\cong S_{I(X)}/J(\A)S_{I(X)}\cong S_{I(X)}/J(\A_X)S_{I(X)}.$ Since the localization preserves freeness, if we localize $\mathcal{F}_{\A}$ at the prime ideal $I(X)$ we obtain the exact sequence $(\mathcal{F}_{\A})_{I(X)}$.
In general, $(\mathcal{F}_{\A})_{I(X)}$ is a free resolution of $S_{I(X)}/J(\A_X)S_{I(X)}$, but it is not minimal. This implies that for all $i\ge0$ $$b_i(S_{I(X)}/J(\A_X)S_{I(X)})\le b_i(S/J(\A)).$$

Let $\mathcal{F}_{\A_X}$ be a minimal free resolution of $S/J(\A_X)$. Similarly to the case of $S/J(\A)$, we have that $(\mathcal{F}_{\A_X})_{I(X)}$ is a free resolution of $S_{I(X)}/J(\A_X)S_{I(X)}$. Since $I(X)=(x_1\dots,x_s)$, we have that $Q(\A_X)\in\KK[x_1\dots,x_s]$. This implies that all of the matrix entries of $\mathcal{F}_{\A_X}$ belong to the ideal $I(X)=(x_1\dots,x_s)$, and hence $(\mathcal{F}_{\A_X})_{I(X)}$ is also minimal. This implies that for all $i\ge0$ $$b_i(S/J(\A_X))=b_i(S_{I(X)}/J(\A_X)S_{I(X)})\le b_i(S/J(\A)).$$

The second inequality follows directly from the first one and Lemma~\ref{lemma:fromresDAtoJac}.
\end{proof}

As a direct consequence of Proposition~\ref{prop:localBettinumbers}, we have the following result

\begin{Corollary}\label{cor:projdimdisequal} Let $\A$ be an arrangement in $\KK^l$ and $X\in L(\A)$. Then $$\projdim(S/J(\A_X))\le\projdim(S/J(\A)).$$ This is equivalent to
$$\projdim(D(\A_X))\le\projdim(D(\A)).$$
\end{Corollary}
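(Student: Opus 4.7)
The plan is to deduce the corollary directly from Proposition~\ref{prop:localBettinumbers} by recalling that the projective dimension of a finitely generated graded $S$-module is the largest index at which its total Betti number is nonzero. Given the pointwise inequality $b_i(S/J(\A_X)) \le b_i(S/J(\A))$ for all $i\ge 0$ from that proposition, the vanishing $b_i(S/J(\A)) = 0$ for $i > \projdim(S/J(\A))$ forces $b_i(S/J(\A_X)) = 0$ in the same range, whence $\projdim(S/J(\A_X)) \le \projdim(S/J(\A))$.

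For the equivalence with the inequality on $\projdim(D(\A))$, I would invoke Lemma~\ref{lemma:fromresDAtoJac} together with the standard exact sequences
$$0 \to D_0(\A) \to S(-n+1)^l \to J(\A) \to 0 \quad \text{and} \quad 0 \to J(\A) \to S \to S/J(\A) \to 0,$$
using $D(\A) \cong S\cdot\delta_E \oplus D_0(\A)$. These identify the minimal free resolution of $S/J(\A)$ with a shift by two homological degrees of the minimal free resolution of $D(\A)$, giving $\projdim(S/J(\A)) = \projdim(D(\A)) + 2$ whenever $J(\A) \neq S$. Applying the same identity to $\A_X$ (which is itself central, so the Euler derivation and the two short exact sequences apply verbatim) translates the two projective-dimension inequalities into each other.

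There is essentially no obstacle here; all the substantive content has already been carried by Proposition~\ref{prop:localBettinumbers}, and the remaining step is only the standard homological bookkeeping between the resolutions of $S/J(\A)$ and $D(\A)$ that underlies Lemma~\ref{lemma:fromresDAtoJac}. The only point requiring a moment of care is to verify that the shift by two homological degrees is valid simultaneously for $\A$ and $\A_X$, which holds since both are nontrivial central arrangements.
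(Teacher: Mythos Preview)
Your proposal is correct and follows the same approach as the paper, which simply records the corollary as a direct consequence of Proposition~\ref{prop:localBettinumbers}. You spell out the standard homological translation $\projdim(S/J(\A)) = \projdim(D(\A)) + 2$ via the two short exact sequences, which is exactly the mechanism behind the paper's appeal to Lemma~\ref{lemma:fromresDAtoJac}; just note that this lemma is stated only for plus-one generated arrangements, so it is your direct argument with the exact sequences (rather than the lemma itself) that carries the general case.
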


\begin{Remark} Corollary~\ref{cor:projdimdisequal} gives a different proof of Theorem~\ref{theo:localisfree}, i.e.\ the fact that if $\A$ is free, then $\A_X$ is free for any $X\in L(\A)$, with respect to the one that appears in \cite{orlterao}. 
\end{Remark}

If we also assume that $\A$ is a plus-one generated arrangement, Proposition~\ref{prop:localBettinumbers} gives us the following result
\begin{Corollary}\label{cor:l+1generatorlocal} Let $\A$ be a plus-one generated arrangement in $\KK^l$ and $X\in L(\A)$. Then $\A_X$ is free or $D(\A_X)$ is generated by $l+1$ vector fields.
\end{Corollary}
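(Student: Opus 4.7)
The plan is to combine Proposition~\ref{prop:localBettinumbers} with the fact that $D(\A_X)$ is a reflexive $S$-module of rank $l$, so any minimal set of generators has at least $l$ elements. Reading Definition~\ref{def:plusonegen} shows that, since $\A$ is plus-one generated, the minimal free resolution of $D(\A)$ has the form in \eqref{eq:respogDA}, so $b_0(D(\A)) = l+1$. Applying the second inequality in Proposition~\ref{prop:localBettinumbers} gives
\begin{equation*}
b_0(D(\A_X)) \le b_0(D(\A)) = l+1.
\end{equation*}

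Combining with the lower bound $b_0(D(\A_X)) \ge l$ coming from the rank, the number of minimal generators of $D(\A_X)$ is either $l$ or $l+1$. The case $b_0(D(\A_X))=l+1$ is the second alternative of the statement. If instead $b_0(D(\A_X)) = l$, we have a surjection $S^l \twoheadrightarrow D(\A_X)$; since $D(\A_X)$ has rank $l$, the kernel is a torsion submodule of the free module $S^l$, hence zero, and so $D(\A_X)$ is free. This establishes the dichotomy.

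The main obstacle is essentially bookkeeping about the minimal number of generators: the inequality from Proposition~\ref{prop:localBettinumbers} is delicate because, when one localizes the minimal resolution of $S/J(\A)$ at $I(X)$, minimality is generally lost (as discussed in the proof of Proposition~\ref{prop:localBettinumbers}), so one should cite the proposition rather than try to redo the localization argument. Once the two numerical bounds $l \le b_0(D(\A_X)) \le l+1$ are in hand, the conclusion is immediate and no further computation is required.
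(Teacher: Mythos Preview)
Your argument is correct, and it is genuinely different from the route taken in the paper. The paper argues on the $S/J(\A)$ side: it first bounds $\projdim(S/J(\A_X))$ between $2$ and $3$ using Corollary~\ref{cor:projdimdisequal} together with the fact that $J(\A_X)$ has height $2$ when $\rk(X)\ge 2$; in the non-free case $\projdim=3$ it then uses Proposition~\ref{prop:localBettinumbers} to force $b_3(S/J(\A_X))=1$ and invokes the vanishing of the alternating sum of Betti numbers to recover the whole shape of the resolution, hence the $l{+}1$ generators of $D(\A_X)$.

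You instead work directly with $D(\A_X)$: the single inequality $b_0(D(\A_X))\le b_0(D(\A))=l{+}1$ from Proposition~\ref{prop:localBettinumbers}, combined with the elementary observation that a rank-$l$ module over a domain needs at least $l$ generators and is free as soon as $l$ suffice, already gives the dichotomy. This is shorter and avoids the Euler-characteristic step and the detour through $S/J$. The trade-off is that the paper's argument, as a by-product, pins down the entire minimal resolution $0\to S\to S^{l+1}\to D(\A_X)\to 0$ in the non-free case (i.e.\ exactly one syzygy), which is the form reused in the proof of Theorem~\ref{theo:localofplusoneisfreeplusone}; your argument yields only $b_0$, though the missing $b_1=1$ is immediate from the same alternating-sum trick once $b_0=l{+}1$ is known.
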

\begin{proof} If $\rk(X)=1$, then $\A_X=\{H\}$, for some $H\in\A$, and hence it is free. Assume $\rk(X)\ge2$. Since $\dim(S/J(\A_X))=2$, this implies that $\projdim(S/J(\A_X))\ge2$. On the other hand, by Corollary~\ref{cor:projdimdisequal}, $\projdim(S/J(\A_X))\le\projdim(S/J(\A))=3$. This implies that, $2\le\projdim(S/J(\A_X))\le3$. If $\projdim(S/J(\A_X))=2$, then $\A_X$ is free. Assume that $\projdim(S/J(\A_X))=3$. By Proposition~\ref{prop:localBettinumbers}, $1\le b_3(S/J(\A_X))\le b_3(S/J(\A))=1$. Moreover, by \cite[Corollary 20.13]{eisenbud}, the alternating sum of the Betti numbers of $S/J(\A_X)$ is zero, and hence 
$S/J(\A_X)$ has a minimal free resolution of the form
$$0\to S\to S^{\beta} \to S^{\beta}\to S\to S/J(\A_X)\to 0,$$
where $\beta=\codim(S/I(X))$. Similarly to Lemma~\ref{lemma:fromresDAtoJac}, this implies that $D(\A_X)$ has a minimal free resolution of the form
$$0 \to S \to S^{l+1} \to D(\A_X)\to 0,$$
and hence, $D(\A_X)$ is generated by $l+1$ vector fields.
\end{proof}

\begin{Lemma}\label{lemma:alphanotinIX} Let $\A$ be a plus-one generated arrangement in $\KK^l$, $X\in L(\A)$ and $\alpha\in S$ the linear form in the resolution~\eqref{eq:respogDA}. If $\alpha\notin I(X)$, then $\A_X$ is free.
\end{Lemma}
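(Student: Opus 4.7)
The plan is to localize the given resolution at $I(X)$, where the hypothesis forces $\alpha$ to become a unit, and then transfer freeness from the localization back to $D(\A_X)$ itself.

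First I would apply the exact functor $-\otimes_S S_{I(X)}$ to the resolution~\eqref{eq:respogDA}. Using $J(\A)S_{I(X)} = J(\A_X)S_{I(X)}$ as in the proof of Proposition~\ref{prop:localBettinumbers}, this yields a free (not a priori minimal) resolution
\begin{equation*}
0 \to S_{I(X)}(-d-1) \xrightarrow{(\alpha, f_1, \ldots, f_l)} S_{I(X)}(-d) \oplus \bigoplus_{i=1}^l S_{I(X)}(-d_i) \to D(\A_X)_{I(X)} \to 0.
\end{equation*}
The hypothesis $\alpha \notin I(X)$ makes $\alpha$ a unit in $S_{I(X)}$. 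A change of basis replaces the generator of $S_{I(X)}(-d)$ by the image $(\alpha, f_1, \ldots, f_l)$, after which that summand cancels with $S_{I(X)}(-d-1)$. The resulting isomorphism $D(\A_X)_{I(X)} \cong \bigoplus_{i=1}^l S_{I(X)}(-d_i)$ shows that $D(\A_X)_{I(X)}$ is free of rank $l$ over $S_{I(X)}$.

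Next, to transfer freeness from the localization back to $D(\A_X)$, I would reuse the minimality argument from the proof of Proposition~\ref{prop:localBettinumbers}: after a change of coordinates $I(X) = (x_1, \ldots, x_s)$ and $Q(\A_X) \in \KK[x_1, \ldots, x_s]$, so every matrix entry in a minimal free resolution of $D(\A_X)$ over $S$ already lies in $I(X)$. Localization at $I(X)$ therefore preserves minimality, so $b_i(D(\A_X)) = b_i(D(\A_X)_{I(X)})$ for all $i \ge 0$. Combined with the previous step, $b_i(D(\A_X)) = 0$ for $i \ge 1$, so $D(\A_X)$ is free.

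I expect the main obstacle to be this final transfer step: in general, freeness of a localization does not imply freeness of the module itself, and what rescues us here is the very specific structure of $\A_X$, namely that all its defining linear forms lie in $I(X)$. Once that is in place, the argument is a direct book-keeping on the plus-one generated resolution once $\alpha$ has been inverted.
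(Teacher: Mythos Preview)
Your proposal is correct and follows essentially the same approach as the paper: localize the resolution~\eqref{eq:respogDA} at $I(X)$, use that $\alpha$ becomes a unit to reduce the number of generators, and transfer back to $D(\A_X)$ via the minimality argument from Proposition~\ref{prop:localBettinumbers}. The only minor variation is in the final step: the paper concludes $b_0(D(\A_X))=l$ and invokes that a rank-$l$ torsion-free module generated by $l$ elements is free, whereas you conclude $b_i(D(\A_X))=0$ for $i\ge 1$ directly from freeness of the localization, which is arguably cleaner.
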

\begin{proof}
Let $\mathcal{F}_{D(\A)}$ be a minimal free resolution of $D(\A)$ of the form \eqref{eq:respogDA}.
Since the localization preserves freeness, if we localize $\mathcal{F}_{D(\A)}$ at the prime ideal $I(X)$ we obtain $(\mathcal{F}_{D(\A)})_{I(X)}$ a resolution of $D(\A)_{I(X)}$. This implies that
$b_0(D(\A)_{I(X)})\le l+1$ and hence $D(\A)_{I(X)}$ can be generated by $l+1$ elements. Since $\alpha\notin I(X)$, the localization of the map $(\alpha,f_1,\dots,f_l)$ contains an invertible element (i.e. the localization of $\alpha$), and hence $D(\A)_{I(X)}$ can be generated by $l$ elements. This implies that $b_0(D(\A)_{I(X)})= l$.

By the proof of Proposition~\ref{prop:localBettinumbers}, $b_i(S_{I(X)}/J(\A)S_{I(X)})=b_i(S/J(\A_X))$. This implies that $b_i(D(\A)_{I(X)})=b_i(D(\A_X))$. In particular, if we consider $i=0$, we obtain that $b_0(D(\A_X))=l$, and hence $\A_X$ is free.
\end{proof}

We are now ready to prove the main result of this section.
\begin{Theorem}\label{theo:localofplusoneisfreeplusone} Let $\A$ be a plus-one generated arrangement in $\KK^l$ and $X\in L(\A)$. Then $\A_X$ is free or plus-one generated.
\end{Theorem}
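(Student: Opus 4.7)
The plan is to combine Corollary~\ref{cor:l+1generatorlocal} with a graded refinement of Proposition~\ref{prop:localBettinumbers}, and then recognize the plus-one generated shape of the resulting minimal free resolution. If $\A_X$ is free, there is nothing to prove, so assume $\A_X$ is not free. Under this assumption, Corollary~\ref{cor:l+1generatorlocal} tells us that the minimal free resolution of $D(\A_X)$ has the graded shape
$$0\to S(-a)\to \bigoplus_{j=0}^{l}S(-b_j)\to D(\A_X)\to 0,$$
and the remaining task is to pin down the shifts $a,b_0,\ldots,b_l$.

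To do this, I would first upgrade Proposition~\ref{prop:localBettinumbers} to the graded level: for every $i$ and every degree $j$,
$$b_{i,j}(D(\A_X))\le b_{i,j}(D(\A)).$$
The proof of Proposition~\ref{prop:localBettinumbers} carries over verbatim, since every step preserves the grading. Specifically, after the coordinate change making $I(X)=(x_1,\ldots,x_s)$, the isomorphism $S/J(\A_X)\cong (\KK[x_1,\ldots,x_s]/I')\otimes_\KK \KK[x_{s+1},\ldots,x_l]$ forces the minimal graded free resolution of $S/J(\A_X)$ to have all matrix entries in $I(X)$, so localizing it at $I(X)$ keeps it graded minimal; while localizing the minimal resolution of $S/J(\A)$ produces a (possibly non-minimal) graded free resolution of the same localized module. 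Comparing graded Betti numbers and passing back to $D$ via Lemma~\ref{lemma:fromresDAtoJac} gives the stated inequality. Since $\A$ is plus-one generated, the nonzero graded Betti numbers of $D(\A)$ sit exactly at the positions $(0,d)$, $(0,d_i)$ for $i=1,\ldots,l$, and $(1,d+1)$, so the same is true for $D(\A_X)$.

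A counting argument then locks the shifts. By Corollary~\ref{cor:l+1generatorlocal}, $D(\A_X)$ has exactly $l+1$ minimal generators and exactly one minimal first syzygy. Combining
$$\sum_{j}b_{0,j}(D(\A_X))=l+1=\sum_{j}b_{0,j}(D(\A))$$
with the pointwise bound $b_{0,j}(D(\A_X))\le b_{0,j}(D(\A))$ forces the equality $b_{0,j}(D(\A_X))=b_{0,j}(D(\A))$ for every $j$; similarly, $b_1(D(\A_X))=1$ together with the fact that $b_{1,j}(D(\A))=1$ only at $j=d+1$ forces the single syzygy to live in degree $d+1$. The minimal free resolution of $D(\A_X)$ therefore has the form
$$0\to S(-d-1)\to S(-d)\oplus\bigoplus_{i=1}^{l}S(-d_i)\to D(\A_X)\to 0,$$
so by Definition~\ref{def:plusonegen}, $\A_X$ is plus-one generated with the same exponents $(d_1,\ldots,d_l)$ and level $d$ as $\A$.

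The main point to be careful about is the graded refinement of Proposition~\ref{prop:localBettinumbers}: one needs both that the minimal resolution of $S/J(\A_X)$ survives localization as a graded minimal complex (giving the equality of graded Betti numbers before and after localization), and that localizing the minimal resolution of $S/J(\A)$ gives a graded complex bounding the Betti numbers pointwise. Both follow from standard facts about localization of graded minimal resolutions at homogeneous primes, but should be stated explicitly, since Proposition~\ref{prop:localBettinumbers} is written only at the level of total Betti numbers.
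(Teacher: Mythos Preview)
Your argument has a genuine gap at the graded refinement step: the inequality $b_{i,j}(D(\A_X))\le b_{i,j}(D(\A))$ is false in general, and your conclusion that $\A_X$ carries the \emph{same} exponents and level as $\A$ is false as well.

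The reason the graded upgrade of Proposition~\ref{prop:localBettinumbers} fails is that the ring $S_{I(X)}$ is not $\mathbb{Z}$-graded in any way compatible with the standard grading on $S$: with $I(X)=(x_1,\dots,x_s)$ you have inverted elements such as $x_{s+1}$ of positive degree, so there is no notion of ``graded free resolution'' over $S_{I(X)}$ to compare to. The equality $b_i(S/J(\A_X))=b_i(S_{I(X)}/J(\A_X)S_{I(X)})$ in the proof of Proposition~\ref{prop:localBettinumbers} is a statement about total Betti numbers of a module over a local ring, and it does not refine to graded Betti numbers. Concretely: if $\A$ is essential and $X$ is any flat with $\A_X\subsetneq\A$, then $\A_X$ is non-essential and $D(\A_X)$ has a minimal generator in degree $0$ (a constant vector field tangent to $X$), whereas $b_{0,0}(D(\A))=0$. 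So already $b_{0,0}(D(\A_X))>b_{0,0}(D(\A))$, and your counting argument forcing $b_{0,j}(D(\A_X))=b_{0,j}(D(\A))$ collapses. More globally, since $|\A_X|<|\A|$ the numerical invariants of $\A_X$ must differ from those of $\A$ (compare for instance Example~\ref{EX:DimcaNONonly2flats} with any of its proper localizations).

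The paper's proof does not try to control graded Betti numbers. Instead it works with the map $\partial_3$ itself: from Corollary~\ref{cor:l+1generatorlocal} one knows the \emph{shape} of the minimal resolution of $S/J(\A_X)$, and the only thing to check is that the single second syzygy involves a linear entry. Using Lemma~\ref{lemma:alphanotinIX} one may assume $\alpha\in I(X)$; then in the localized complex $(\mathcal{F}_\A)_{I(X)}$ the entry $\alpha$ remains non-invertible, so when one prunes invertible entries to pass to the minimal resolution of the localization, the linear entry $\alpha$ survives and furnishes the required $\alpha'$ in $\partial'_3$. That is the step your approach is missing: you must track the map, not just the numbers.
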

\begin{proof} Let $\mathcal{F}_\A$ be a minimal free resolution of $S/J(\A)$ of the form \eqref{eq:respogjac}. 

If $X\in L(\A)$ is such that $\A=\A_X$ or if $\A_X$ is free, the statement is obviously true. Assume $\A_X\subsetneq\A$ and that $\A_X$ is non-free. By Corollary~\ref{cor:projdimdisequal}, $\projdim(S/J(\A_X))=3$. In this situation, similarly to the proof of Corollary~\ref{cor:l+1generatorlocal}, we have that $S/J(\A_X)$ has a minimal graded free resolution $\mathcal{F}_{\A_X}$ of the form
$$0{\to}S(-m-e){\to} S(-m-d'+1)\oplus(\bigoplus_{i=r}^{l} S(-m-d'_i+1)) {\to} S(-m+1)^{l-r+2}{\to}S,$$
where $m=|\A_X|$, $d'\le e$ and $k\le r$. To conclude we need to show that $d'=e$.
Notice that this is equivalent to prove that the map 
$$\partial'_3\colon S(-m-e)\to S(-m-d'+1)\oplus(\bigoplus_{i=r}^{l} S(-m-d'_i+1)) $$
is defined by a matrix of the form $(\alpha', f'_1,\dots,f'_{l-r+1})$, where $\alpha', f'_i\in S$ and $\alpha'$ is zero or a homogenous polynomial of degree $1$.

By the proof of Proposition~\ref{prop:localBettinumbers}, $b_i(S_{I(X)}/J(\A_X)S_{I(X)})=b_i(S/J(\A_X))$. Hence, $$\projdim(S/J(\A))=\projdim(S/J(\A_X))=\projdim(S_{I(X)}/J(\A_X)S_{I(X)}).$$ This implies that the localization of the map $\partial_3$ in $\mathcal{F}_\A$ is not the zero map and it is defined by the localization of the matrix  $(\alpha, f_{i_1},\dots,f_{i_{l-k+1}})$.

By Lemma~\ref{lemma:alphanotinIX}, we can assume that $\alpha\in I(X)$. The localization of the map $\partial_3$ is defined by a matrix of the form $(\tilde{\alpha}, \tilde{f}_{i_1},\dots, \tilde{f}_{i_{l-k+1}})$, with $\tilde{\alpha}$ not invertible. This implies that the localization of the map $\partial'_3$ is represented by a matrix of the form $(\tilde{\alpha}, \tilde{f}_{j_1},\dots, \tilde{f}_{j_{l-r+1}})$, that is a submatrix of the matrix $(\tilde{\alpha}, \tilde{f}_{i_1},\dots, \tilde{f}_{i_{l-k+1}})$ obtained by deleting some invertible entries.
As described in the proof of Proposition~\ref{prop:localBettinumbers}, we can assume that all of the matrix entries of $\mathcal{F}_{\A_X}$ belong to the ideal $I(X)$. This implies that the map $\partial'_3$ is defined by a matrix of the form $(\alpha', f'_1,\dots,f'_{l-r+1})$, where $\alpha',f'_i\in S$ and $\alpha'$ is zero or a homogenous polynomial of degree $1$.
\end{proof}

One of the possible next step in this study would be to understand if we can generalize Theorem~\ref{theo:delisfreepog} to plus-one generated arrangements. At the moment this is still an open problem. The next example shows that in general the deletion of a plus-one generated arrangement can be free, plus-one generated or none of them.
\begin{Example}
Consider the arrangement $\A$ in $\CC^5$ defined by $xyztw(x+y+z)(x-w)(y+2t)$. $\A$ is plus-one generated, in fact $S/J(\A)$ has resolution
$$0\to S(-10)\to S(-9)^5\to S(-7)^5\to S.$$
Consider $\A_1=\A\setminus\{x+y+z=0\}$. Then $\A_1$ is free, in fact $S/J(\A_1)$ has resolution
$$0\to S(-8)^2\oplus S(-7)^2\to S(-6)^5\to S.$$
Consider $\A_2=\A\setminus\{y+2t=0\}$. Then $\A_2$ is plus-one generated, in fact $S/J(\A_2)$ has resolution
$$0\to S(-9)\to S(-8)^4\oplus S(-7)\to S(-6)^5\to S.$$
Consider $\A_3=\A\setminus\{x=0\}$. Then $\A_3$ is not free nor plus-one generated, in fact $S/J(\A_3)$ has resolution
$$0 \to S(-10) \to S(-9)^4 \to S(-8)^7 \to S(-6)^5 \to S.$$
\end{Example}

\bigskip
\paragraph{\textbf{Acknowledgements}} The authors would like to thank M. Yoshinaga and T. N. Tran for many helpful discussions. During the preparation of this article the second author was supported by JSPS Grant-in-Aid for Early-Career Scientists (19K14493). 



\bibliographystyle{plain}

\end{document}